\newif\ifprivate
\newtheorem{theorem}{Theorem}
\newtheorem{lemma}{Lemma}[section]
\newtheorem{proposition}[lemma]{Proposition}
\theoremstyle{definition}
\newtheorem{definition}[lemma]{Definition}
\renewcommand{\MR}[1]{}
\newif\ifpdf
\begin{document}

\title{Chemical Trees Minimizing Energy and Hosoya Index}

\author{Clemens Heuberger}
\thanks{This paper was written while C.~Heuberger 
  was a visitor at the
  Center of Experimental Mathematics at the University of Stellenbosch. He
  thanks the center
  for its hospitality. He was also supported by the Austrian
  Science Foundation FWF, projects S9606, that is part of the
  Austrian National Research Network ``Analytic Combinatorics
  and Probabilistic Number Theory.''}
\address{Institut f\"ur Mathematik B\\Technische Universit\"at Graz\\Austria}
\email{clemens.heuberger@tugraz.at}
\author{Stephan G. Wagner}
\address{Department of Mathematical Sciences\\University of Stellenbosch\\South Africa}
\email{swagner@sun.ac.za}

\subjclass[2000]{
05C70; 
05C05 
05C50 
92E10
}
\keywords{Energy of graphs; Matchings; Chemical trees; Matching polynomial;
  Hosoya index}

\begin{abstract}
The energy of a molecular graph is a popular parameter that is defined as the sum of the absolute values of a graph's eigenvalues. It is well known that the energy is related to the matching polynomial and thus also to the Hosoya index via a certain Coulson integral. Trees minimizing the energy under various additional conditions have been determined in the past, e.g., trees with a given diameter or trees with a perfect matching. However, it is quite a natural problem to minimize the energy of trees with bounded maximum degree---clearly, the case of maximum degree 4 (so-called chemical trees) is the most important one. We will show that the trees with given maximum degree that minimize the energy are the same that have been shown previously to minimize the Hosoya index and maximize the Merrifield-Simmons index, thus also proving a conjecture due to Fischermann et al. Finally, we show that the minimal energy grows linearly with the size of the trees, with explicitly computable growth constants that only depend on the maximum degree.
\end{abstract}
\date{\today}

\maketitle
\ifprivate \thispagestyle{headings}\pagestyle{headings} \markboth{\jobname{}
rev. \SVNRevision{} ---
  \SVNDate{} \SVNTime}{\jobname{} rev. \SVNRevision{} --- \SVNDate{} \SVNTime}
\fi

\section{Introduction and statement of results}

The energy is a graph parameter stemming from the H\"uckel molecular orbital (HMO) approximation for the total $\pi$-electron energy, see \cite{Gutman:1978:energy,Gutman:2001:energy}. It is defined as the sum of the absolute values of all eigenvalues of a graph: if $\lambda_1, \lambda_2, \ldots, \lambda_n$ denotes the spectrum of a graph $G$ (i.e. the spectrum of its adjacency matrix), one has
$$E(G) = \sum_{i=1}^n |\lambda_i|.$$
For more information about the chemical importance of the energy as well as important properties, we refer to the book \cite{Gutman:1986:mathematical} and the survey article \cite{Gutman:2001:energy}. 
It is known that there are many interesting relations between the spectrum and matchings in the case of trees. This is due to the fact that the characteristic polynomial $\phi(T,x) = \det (x I - A(T))$, where $A(T)$ denotes the adjacency matrix of a tree $T$ and $I$ the identity matrix, can be expressed as \cite{Cvetkovic:1995:spectra}\begin{equation}\label{eq:characteristic-matching}
\phi(T,x) = \sum_{k \geq 0} (-1)^k m(T,k) x^{n-2k},
\end{equation}
where $m(T,k)$ denotes the number of matchings of $T$ of cardinality $k$. It follows from this representation that the energy can actually be computed by means of the Coulson integral~\cite{Gutman:1986:mathematical}
\begin{equation}
\label{eq:coul}
  E(T)=\frac2{\pi} \int_{0}^{\infty} x^{-2} \log\left(\sum_{k}m(T,k)x^{2k}\right)\,dx.
\end{equation}
This connection to matchings has been used in various instances to determine the extremal values of the energy within certain classes of graphs (trees or graphs that are closely related to trees, such as unicyclic graphs---see \cite{Hou:2001:unicyclic}). We mention, for instance, the recent paper of Yan and Ye \cite{Yan:2005:minimal}, where trees with prescribed diameter maximizing the energy are characterized. An earlier example is the article by Zhang and Li \cite{Zhang:1999:acyclic}, where trees with a perfect matching are studied.

For obvious reasons, it is also a natural problem to study \emph{chemical trees} (i.e., trees with maximum degree $\leq 4$) and generally trees with bounded maximum degree. Fischermann et al. \cite{Fischermann:2002:extremal} noticed that the chemical trees minimizing the energy agree with those minimizing the Hosoya index (i.e., the total number of matchings, see \cite{Hosoya:1986:topological}) for a small number of vertices. Indeed, we will show that this is always the case and also holds for arbitrary given maximum degree. The resulting trees have also been shown to maximize the Merrifield-Simmons index (i.e., the total number of independent vertex subsets, see \cite{Merrifield:1989:topological}) in an earlier paper by the authors \cite{Heuberger:2008:maximizing}. We will show that essentially the same method can be used again. In view of the representation~\eqref{eq:coul}, it is sufficient to minimize the polynomial 
\begin{equation*}
  M(T,x)=\sum_k m(T,k)x^{k}
\end{equation*}
for all positive values of $x$. Surprisingly, it turns out that the result of this minimization problem does not depend on
$x$. In order to state our result, we use the notion of \emph{complete $d$-ary trees}: the complete $d$-ary tree of height $h-1$ is denoted by $C_h$, i.e., $C_1$ is a single vertex and $C_{h}$ has $d$ branches $C_{h-1}$, \ldots,
$C_{h-1}$, cf.\ Figure~\ref{fig:complete-binary-trees}. It is convenient to
set $C_0$ to be the empty graph. 
\begin{figure}[htbp]
  \centering
  \subfigure[$C_1$ for all $d$]{\hbox to 0.2\linewidth{\hfill\includegraphics{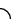}\hfill}}\hfil
  \subfigure[$C_2$ for $d=2$]{\hbox to 0.2\linewidth{\hfill\includegraphics{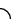}\hfill}}\hfil
  \subfigure[$C_2$ for $d=3$]{\hbox to 0.2\linewidth{\hfill\includegraphics{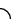}\hfill}}\hfil
  \subfigure[$C_3$ for $d=2$]{\hbox to 0.2\linewidth{\hfill\includegraphics{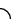}\hfill}}
  \caption{Complete $d$-ary trees}
  \label{fig:complete-binary-trees}
\end{figure}

Let $\mathcal{T}_{n,d}$ be the set of all trees with $n$ vertices and maximum degree $\le d+1$. We define a special tree $T^{*}_{n,d}$ as follows (see also \cite{Heuberger:2008:maximizing}):
\begin{definition}\label{definition:wagner-tree}
$T^{*}_{n,d}$ is the tree with $n$ vertices that can be decomposed as
  \begin{center}
    \includegraphics{energy_35.eps}
  \end{center}
  with $B_{k,1}$, \ldots, $B_{k,d-1}\in\{C_k,C_{k+2}\}$ for $0\le k<\ell$ and either   $B_{\ell,1}=\cdots=B_{\ell,d}=C_{\ell-1}$ or $B_{\ell,1}=\cdots=B_{\ell,d}=C_\ell$ or 
  $B_{\ell,1}$, \ldots, $B_{\ell,d}\in\{C_{\ell},C_{\ell+1},C_{\ell+2}\}$,
  where at least two of $B_{\ell,1}$, \ldots, $B_{\ell,d}$ equal $C_{\ell+1}$. This representation is unique, and one has the ``digital expansion''
  \begin{equation}\label{eq:digital-expansion}
  (d-1)n + 1 = \sum_{k=0}^{\ell} a_k d^k,
  \end{equation}
  where $a_k = (d-1)(1 + (d+1)r_k)$ and $0\le r_k\le d-1$ is the number of $B_{k,i}$ that are isomorphic to $C_{k+2}$ for $k < \ell$, and
  \begin{itemize}
  \item $a_{\ell} = 1$ if $B_{\ell,1}=\cdots=B_{\ell,d}=C_{\ell-1}$,
  \item $a_{\ell} = d$ if $B_{\ell,1}=\cdots=B_{\ell,d}=C_{\ell}$,
  \item or otherwise $a_\ell = d + (d-1)q_\ell + (d^2-1)r_\ell$, where $q_\ell \geq 2$ is the number of $B_{\ell,i}$ that are isomorphic to $C_{\ell+1}$ and $r_\ell$ the number of $B_{\ell,i}$ that are isomorphic to $C_{\ell+2}$.
  \end{itemize}
\end{definition}

The tree $T^{*}_{n,d}$ has already been shown to minimize the Hosoya index and maximize
the Merrifield-Simmons index over $\mathcal{T}_{n,d}$ \cite{Heuberger:2008:maximizing}. In the present paper, we will prove the following result:

\begin{proposition}\label{prop:minimal-tree-x}
  Let $n$ and $d$ be positive integers and $x>0$. Then $T^{*}_{n,d}$ is the unique (up to isomorphism) tree in $\mathcal{T}_{n,d}$ that minimizes $M(T,x)$.
\end{proposition}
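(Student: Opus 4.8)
The plan is to prove this by a structural induction on the trees in $\mathcal{T}_{n,d}$, mimicking the strategy that already worked for the Hosoya and Merrifield--Simmons indices in~\cite{Heuberger:2008:maximizing}. The key observation is that $M(T,x)$ satisfies clean recursions under the natural tree operations. If $v$ is a leaf of $T$ attached to $u$, and $T' = T - v$, $T'' = T - v - u$, then counting matchings according to whether the edge $uv$ is used gives
\begin{equation*}
  M(T,x) = M(T',x) + x\, M(T'',x).
\end{equation*}
More generally, for a rooted tree built by joining several rooted branches at a common root, $M(T,x)$ factors multiplicatively in a suitable two-variable bookkeeping that separates matchings saturating the root from those that do not. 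I would therefore set up, for each rooted tree $R$, the pair $(A(R,x), B(R,x))$ where $A$ counts all matchings and $B$ counts matchings leaving the root unsaturated, and record how these transform when branches are combined at a root; the quantity to minimize, $M(T,x)=A(T,x)$, is then controlled by these local data.

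The heart of the argument is an \emph{exchange} or \emph{local modification} lemma: given any tree $T$ that minimizes $M(\cdot,x)$ over $\mathcal{T}_{n,d}$, I would show that it must have the branch structure prescribed in Definition~\ref{definition:wagner-tree}, i.e. that every branch hanging off the spine is a complete $d$-ary tree $C_k$, with the admissible jumps $C_k \to C_{k+2}$ and the specified boundary behaviour at level $\ell$. Concretely, I expect to prove a sequence of claims of the form: if a rooted branch is \emph{not} one of the complete trees $C_k$ (equivalently, if at some vertex the subtrees are not ``as balanced as the degree bound allows''), then one can rearrange vertices---pruning a leaf from an ``overfull'' part and regrafting it onto a ``deficient'' part---so as to strictly decrease $M(T,x)$ for every $x>0$. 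The comparison inequalities driving these exchanges should reduce, via the recursion above, to polynomial inequalities in the entries of $(A,B)$ that hold coefficientwise, hence for all positive $x$ simultaneously; this is what makes the minimizer independent of $x$, as announced in the introduction.

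To organize the comparisons I would work with the ratio $Q(R,x) = B(R,x)/A(R,x) \in (0,1)$, the ``probability'' that the root is free. The merging rule at a root of degree $d$ expresses the new ratio as a monotone function of the children's ratios, so a single real-valued quasi-order on rooted branches can be extracted that is compatible with grafting. The main technical step is to verify that this order is totally consistent across all $x>0$ and that $C_k$ occupies exactly the extremal positions forced by the digital expansion~\eqref{eq:digital-expansion}; once that is in place, uniqueness of $T^{*}_{n,d}$ follows because any tie in the order would contradict the strictness of the exchange inequalities. The digital-expansion identity itself should drop out of counting vertices level by level, exactly as in~\cite{Heuberger:2008:maximizing}, so I would invoke that bookkeeping rather than redo it.

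I anticipate that the main obstacle is establishing the exchange inequalities \emph{uniformly in $x$} with \emph{strict} inequality. For the Hosoya index one only needs the inequalities at the single specialization $x=1$ (and for Merrifield--Simmons a dual statement), whereas here the inequality $M(T,x) > M(T^{*}_{n,d},x)$ must hold for every positive $x$. I would try to prove the stronger coefficientwise statement, namely that $m(T,k) \ge m(T^{*}_{n,d},k)$ for all $k$ with strict inequality for at least one $k$, which immediately yields the result for all $x>0$ by~\eqref{eq:coul}; but if the individual matching numbers are not comparable, I would instead have to argue directly that the relevant rational functions $A(R,x)/A(R',x)$ and the merged ratios stay on the correct side of $1$ for all $x>0$, controlling sign changes through the factored form of $A$ and $B$ as polynomials with nonnegative coefficients. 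Handling the boundary cases at the top level $\ell$---the three alternatives in Definition~\ref{definition:wagner-tree}, and in particular the requirement that at least two of the top branches be $C_{\ell+1}$---is where I expect the case analysis to be most delicate, since that is precisely where the minimizer is not forced to be a single complete tree.
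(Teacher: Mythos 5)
Your setup coincides with the paper's: your pair $(A,B)$ is their $(M,M_0)$ (all matchings versus matchings leaving the root free), your ratio $Q$ is their $\tau(T,x)=M_0(T,x)/M(T,x)$, and the proof is indeed an exchange lemma plus a structural induction on the outline of the tree, carried over from the Merrifield--Simmons/Hosoya paper \cite{Heuberger:2008:maximizing}. The one point where you diverge is precisely the issue you flag as the main obstacle, uniformity in $x$, and there your preferred route is not the paper's. The paper neither proves nor needs the coefficientwise domination $m(T,k)\ge m(T^{*}_{n,d},k)$ (which is a strictly stronger claim and is not established anywhere); instead it fixes an arbitrary $x>0$ once and for all and runs the entire argument for that single $x$. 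The exchange lemma is then only a \emph{necessary} condition satisfied by a tree that is minimal for that particular $x$ --- a rearrangement inequality for a quantity $G$ built from the four polynomials $M_{00},M_{01},M_{10},M_{11}$ of the connecting tree $T_0$, forcing $\max_i\tau(L_i,x)\le\min_i\tau(R_i,x)$ --- and the induction shows that these necessary conditions already pin down $T^{*}_{n,d}$ uniquely. Independence of the answer from $x$ is then automatic, because the only inputs to the induction are qualitative ordering facts about $\tau(C_h,x)$ (even-indexed terms increasing, odd-indexed decreasing, interleaved around $2/(1+\sqrt{1+4dx})$), and these hold verbatim for every $x>0$. So your fallback plan (argue at each fixed $x$ via the ratios) is the correct one and is what the paper does; your Plan A is unnecessary and would likely stall.
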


Note that the Hosoya index is exactly $M(T,1)$, and so it is a trivial corollary that $T^{*}_{n,d}$ minimizes the Hosoya index. Our main theorem is another immediate consequence that follows from~\eqref{eq:coul}:

\begin{theorem}\label{main-th}
  Let $n$ and $d$ be positive integers. Then $T^{*}_{n,d}$ is the unique (up to isomorphism)
  tree in $\mathcal{T}_{n,d}$ that minimizes the energy.
\end{theorem}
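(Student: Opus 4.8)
The plan is to deduce the theorem directly from Proposition~\ref{prop:minimal-tree-x} together with the Coulson integral representation~\eqref{eq:coul}. The key observation is that, after the substitution $x\mapsto x^2$, the integrand appearing in~\eqref{eq:coul} is exactly $\log M(T,x^2)$, since $\sum_k m(T,k)x^{2k}=M(T,x^2)$. Thus the energy of any tree $T$ is a positively weighted integral of the logarithm of its matching generating polynomial evaluated at squares, and the whole question reduces to comparing these polynomials.

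First I would fix an arbitrary $T\in\mathcal{T}_{n,d}$ with $T\not\cong T^{*}_{n,d}$ and write down the difference of the two Coulson integrals. Because $E(T)$ and $E(T^{*}_{n,d})$ are both finite (each energy is a finite sum of absolute values of eigenvalues), the two integrals in~\eqref{eq:coul} converge, and their difference may be combined into a single integral
\begin{equation*}
  E(T) - E(T^{*}_{n,d}) = \frac{2}{\pi} \int_{0}^{\infty} x^{-2} \bigl( \log M(T, x^2) - \log M(T^{*}_{n,d}, x^2) \bigr)\, dx.
\end{equation*}
Next I would apply Proposition~\ref{prop:minimal-tree-x} with the positive parameter $x^2$ in place of $x$: since $T^{*}_{n,d}$ is the \emph{unique} minimizer of $M(\cdot,x^2)$, one has $M(T^{*}_{n,d},x^2) < M(T,x^2)$ strictly for every $x>0$. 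As $\log$ is strictly increasing, the integrand is strictly positive on all of $(0,\infty)$, and the weight $\tfrac{2}{\pi}x^{-2}$ is positive throughout. Integrating a function that is strictly positive on a set of full measure gives a strictly positive value, so $E(T) > E(T^{*}_{n,d})$, which yields both minimality and uniqueness simultaneously.

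Since all of the structural and combinatorial difficulty has already been absorbed into Proposition~\ref{prop:minimal-tree-x}, I do not expect any genuine obstacle at this stage; the argument is essentially a monotonicity transfer. The only point deserving a moment's care is the passage from two convergent integrals to the single integral of their difference, which is justified the instant one records that both energies, hence both Coulson integrals, are finite. It is precisely the strict monotonicity of the logarithm together with the positivity of the weight that carries the fixed-$x$ strict inequality for $M$ over to a strict inequality for the energy, and it is the \emph{uniqueness} clause of the proposition—holding for every individual $x>0$—that rules out equality and thereby secures uniqueness of the energy minimizer.
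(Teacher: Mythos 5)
Your argument is correct and is exactly the route the paper takes: the paper derives Theorem~\ref{main-th} as an ``immediate consequence'' of Proposition~\ref{prop:minimal-tree-x} and the Coulson integral~\eqref{eq:coul}, and your write-up simply fills in the details (strict pointwise inequality of $M(\cdot,x^2)$, monotonicity of $\log$, positivity of the weight) that the authors leave implicit. No gap.
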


In our final section, we will study the asymptotic behavior of the minimal energy:

\begin{theorem}\label{th:asympt}
The energy of $T^{*}_{n,d}$ is asymptotically
$$E(T^{*}_{n,d}) = \alpha_d n + O(\log n),$$
where
\begin{equation}\label{eq:alpha-d-definition}
\alpha_d = 2\sqrt{d}(d-1)^2 \left( \sum_{\substack{j\geq 1\\j \equiv 0 \mod
      2}} d^{-j} \left( \cot \frac{\pi}{2j} - 1 \right) + \sum_{\substack{j
      \geq 1\\j \equiv 1 \mod 2}} d^{-j} \left( \csc \frac{\pi}{2j} - 1 \right)
\right)
\end{equation}
is a constant that only depends on $d$.
\end{theorem}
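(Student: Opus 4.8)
The plan is to reduce the energy of the building blocks to energies of paths, whose spectra are explicit, and then to assemble $T^{*}_{n,d}$ from these blocks through the additive structure of the matching polynomial. First I would determine the spectrum of the complete $d$-ary tree $C_h$ by exploiting its automorphism group. Each adjacency eigenvector may be taken either radial (constant on every level) or supported in the subtrees hanging below a single vertex $w$, with the coefficients on the $d$ children of $w$ summing to zero and all entries above $w$ vanishing. Weighting level $\ell$ by $d^{\ell/2}$ turns every such family into a tridiagonal matrix with zero diagonal and off-diagonal entries $\sqrt d$, whose eigenvalues are $2\sqrt d\cos(\pi k/(m+1))$, i.e.\ $\sqrt d$ times the spectrum of the path $P_m$. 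With the multiplicities ($1$ for the radial family, of size $h$, and $d^{\,j-1}(d-1)$ for the family first supported at level $j$, of size $h-j$) this gives
\[
E(C_h)=\sqrt d\,E(P_h)+\sqrt d\,(d-1)\sum_{m=1}^{h-1}d^{\,h-1-m}\,E(P_m).
\]

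Dividing by $|C_h|=(d^h-1)/(d-1)$ and letting $h\to\infty$, the term $\sqrt d\,E(P_h)=O(h)$ is negligible and the geometric sum converges because $E(P_m)=O(m)$, so
\[
\alpha_d=\lim_{h\to\infty}\frac{E(C_h)}{|C_h|}=\frac{(d-1)^2}{\sqrt d}\sum_{m\ge1}d^{-m}E(P_m).
\]
Inserting the classical closed forms $E(P_m)=2\csc\frac{\pi}{2(m+1)}-2$ for even $m$ and $E(P_m)=2\cot\frac{\pi}{2(m+1)}-2$ for odd $m$, substituting $j=m+1$ (which exchanges the parities) and collecting the two sums reproduces \eqref{eq:alpha-d-definition}; the term $j=1$ disappears since $\csc\frac{\pi}{2}=1$. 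Carrying the expansion $E(P_m)=\frac4\pi m+(\frac4\pi-2)+O(1/m)$ one step further shows that the linear contributions cancel, giving the uniform block estimate $E(C_h)=\alpha_d|C_h|+O(1)$; this bounded error per block is what will make the global error logarithmic rather than quadratic.

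To pass to $T^{*}_{n,d}$ I would use that, by Definition~\ref{definition:wagner-tree}, it is a spine $v_0,\dots,v_\ell$ carrying complete $d$-ary branches $B_{k,i}$, with $\ell=\log_d n+O(1)$ by \eqref{eq:digital-expansion}. Writing $\tau_k$ for the ratio of root-covered to root-uncovered matchings of the subtree spanned by $v_0,\dots,v_k$ and processing the matching polynomial along the spine yields the telescoping identity
\[
\log M(T^{*}_{n,d},z)=\sum_{k,i}\log M(B_{k,i},z)+\sum_{k=0}^{\ell}\log\bigl(1+\tau_k(z)\bigr).
\]
Substituting into the Coulson integral \eqref{eq:coul} splits $E(T^{*}_{n,d})$ into $\sum_{k,i}E(B_{k,i})$ plus one marginal term per spine vertex. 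The branches partition all but the $\ell+1$ spine vertices, so the first sum is $\alpha_d\sum_{k,i}|B_{k,i}|+O(\ell)=\alpha_d n+O(\log n)$ by the block estimate, while there are only $O(\ell)=O(\log n)$ spine terms.

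The crux is then to bound each spine term $\frac2\pi\int_0^\infty x^{-2}\log(1+\tau_k(x^2))\,dx$ by $O(1)$ \emph{uniformly} in $k$ and $n$. This calls for control of $\tau_k(z)$ as $z\to0$ (where $\tau_k(z)=O(z)$, so the integral converges at the origin) and as $z\to\infty$ (where $\tau_k$ grows only polynomially, making the integrand $O(x^{-2}\log x)$), together with the convergence of $\tau_k$ as $k\to\infty$ once the branch data stabilize. Proving these estimates uniformly in the integration variable, rather than merely pointwise, is the delicate step; by contrast the boundedness of the branch corrections is routine once the spectral reduction is available.
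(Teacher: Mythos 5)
Your argument is correct in outline and reaches the same two intermediate facts as the paper ($E(C_h)=\alpha_d|C_h|+O(1)$, and a spine contribution of size $O(\ell)=O(\log n)$), but by genuinely different routes in both halves. For the blocks, you diagonalize $A(C_h)$ directly via the symmetry decomposition into weighted paths and then quote the classical closed forms for $E(P_m)$; the paper instead factors the matching polynomial as $M(C_h,x)=Q_{h+1}(x)\prod_j Q_j(x)^{(d-1)d^{h-j}}$ and locates the zeros of the $Q_j$, arriving at exactly the eigenvalue data you obtain (your multiplicities $d^{j-1}(d-1)$ and your path-energy expansion reproduce the paper's formula and the cancellation of the $\frac{4\sqrt d\,h}{\pi}$ terms). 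Your spectral route is arguably more transparent about \emph{why} the spectrum consists of scaled path spectra; the paper's route has the advantage of staying entirely inside the matching-polynomial formalism already set up for the minimality proof. For the assembly step, the paper does something simpler than your telescoping identity: it sandwiches $M(T^*_{n,d},x)$ between $\prod_{k,j}M(B_{k,j},x)$ and $\prod_{k,j}M(B_{k,j},x)\,(1+x)^{d(\ell+1)}$ by pure matching combinatorics, and the Coulson integral of the error factor is exactly $2d(\ell+1)$. Your telescoping version also works, but the step you flag as ``delicate'' --- a bound on each spine term uniform in $k$, $n$ and the integration variable --- is in fact immediate and you should not leave it open: with the paper's normalization $\tau=M_0/M$ one has $1+\tau_k(z)=1/\tau(S_k,z)=1+z\sum_i\tau(\cdot,z)\le 1+dz$ by \eqref{eq:recursion-tau} and Lemma~\ref{le:trivial-tau-estimate}, so each spine term is at most $\frac2\pi\int_0^\infty x^{-2}\log(1+dx^2)\,dx=2\sqrt d$; no convergence of $\tau_k$ as $k\to\infty$ is needed. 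With that one line supplied, your proof is complete.
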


\section{Recursive Formul\ae{} for Rooted Trees}

Let $x>0$ be fixed.
In order to derive recursive formul\ae{} for $M(T,x)$, we fix a root $r$ of $T$ and 
we define $m_1(T,k)$ to be the number of matchings of cardinality
$k$ covering the root and $m_0(T,k)$ to be the number of matchings of
cardinality $k$ not covering the root.

We write $M_j(T,x)=\sum_{k} m_j(T,k)x^k$ for
$j\in\{0,1\}$. Obviously, we have $M(T,x)=M_0(T,x)+M_1(T,x)$. The ratio 
\begin{equation}\label{eq:tau-definition}
  \tau(T,x)=\frac{M_0(T,x)}{M(T,x)}
\end{equation}
will be an important auxiliary quantity in our proofs. The following lemma summarizes important properties of all these quantities.

\begin{lemma}\label{le:recursion}
  Let $T_1$,
  \ldots, $T_{\ell}$ be the branches of the rooted tree $T$. Then the following
  recursive formul\ae{} hold:
  \begin{align}
    M_0(T,x)&=\prod_{i=1}^{\ell} M(T_i,x),\label{eq:recursion-M-0}\\
    M_1(T,x)&=x\sum_{i=1}^{\ell} M_0(T_i,x) \prod_{\substack{j=1\\j\neq
        i}}^{\ell}M(T_j,x),\label{eq:recursion-M-1}\\
    \tau(T,x)&=\frac{1}{1+x\sum_{i=1}^{\ell} \tau(T_i,x)}.\label{eq:recursion-tau}
  \end{align}
\end{lemma}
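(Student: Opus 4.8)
The plan is to prove Lemma~\ref{le:recursion} by a direct combinatorial decomposition of the matchings of $T$ according to how they interact with the root. Fix the rooted tree $T$ with branches $T_1,\ldots,T_\ell$, meaning that $T$ consists of the root $r$ together with edges joining $r$ to the root of each $T_i$. First I would establish~\eqref{eq:recursion-M-0}. A matching $\mathcal{M}$ of $T$ that does \emph{not} cover $r$ uses none of the edges incident with $r$, so it is precisely a union of matchings, one chosen independently in each branch $T_i$ (where now each branch root is treated as an ordinary vertex, covered or not). Since the matchings in distinct branches are chosen independently and the cardinalities add, the counting sequence $m_0(T,k)$ is the convolution of the full matching sequences $m(T_i,\cdot)$; passing to generating functions turns this convolution into the product $M_0(T,x)=\prod_{i=1}^\ell M(T_i,x)$, as claimed.

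Next I would treat~\eqref{eq:recursion-M-1}. A matching that \emph{does} cover $r$ must use exactly one edge incident with $r$, say the edge joining $r$ to the root of $T_i$ for some unique $i$. That edge contributes a factor $x$ (one extra edge in the matching) and forces the root of $T_i$ to be covered by this edge, so within $T_i$ itself we may only add a matching that does not cover its root; this contributes $M_0(T_i,x)$. The remaining branches $T_j$ with $j\neq i$ are unconstrained and contribute $\prod_{j\neq i} M(T_j,x)$. Summing over the choice of $i$ gives exactly the right-hand side of~\eqref{eq:recursion-M-1}.

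Finally,~\eqref{eq:recursion-tau} is a formal consequence of the first two identities together with $M=M_0+M_1$ and the definition $\tau(T,x)=M_0(T,x)/M(T,x)$. Writing $M=M_0+M_1$ and dividing through by $M_0=\prod_i M(T_i,x)$, I would compute
\begin{equation*}
  \frac{1}{\tau(T,x)}=\frac{M(T,x)}{M_0(T,x)}=1+\frac{M_1(T,x)}{M_0(T,x)}
  =1+x\sum_{i=1}^\ell \frac{M_0(T_i,x)}{M(T_i,x)}
  =1+x\sum_{i=1}^\ell \tau(T_i,x),
\end{equation*}
where the third equality uses~\eqref{eq:recursion-M-1} and~\eqref{eq:recursion-M-0} and cancels the common product $\prod_{j\neq i}M(T_j,x)$ against the corresponding factors in $M_0(T,x)$. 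Taking reciprocals yields~\eqref{eq:recursion-tau}. None of these steps presents a genuine obstacle; the only point requiring care is the bookkeeping in~\eqref{eq:recursion-M-1}—being explicit that the edge from $r$ into $T_i$ forces us to count precisely the root-avoiding matchings $M_0(T_i,x)$ of that branch, while every other branch is counted in full. Everything else is routine manipulation of the generating functions.
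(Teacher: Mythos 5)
Your proof is correct and follows essentially the same route as the paper: the same case split on whether the root is covered, the same identification of the unique matching edge at the root contributing the factor $x$ and forcing a root-avoiding matching in that branch, and the same algebraic derivation of~\eqref{eq:recursion-tau} from the first two identities. Your write-up is merely a more explicit version of the paper's argument.
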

\begin{proof}
  A matching not covering the root corresponds to a selection of an arbitrary
  matching in each branch. If the matching of $T$ is required to have
  cardinality $k$, the cardinalities of the corresponding matchings in the
  branches must add up to $k$. This is exactly the coefficient of $x^{k}$ in
  the product $\prod_{i=1}^{\ell}M(T_i,x)$. This proves \eqref{eq:recursion-M-0}.

  Every matching of $T$ covering the root contains exactly one edge between
  the root and some branch $T_i$. In this branch, a matching not covering the root of $T_i$ may be
  chosen, in all other branches, arbitrary matchings are allowed. The
  cardinality of the matching of $T$ is then the sum of the cardinalities of the
  matchings in the branches plus one for the edge incident to the root.
  This yields \eqref{eq:recursion-M-1}.

  Finally, \eqref{eq:recursion-tau} is an immediate consequence of
  \eqref{eq:tau-definition}, \eqref{eq:recursion-M-0} and \eqref{eq:recursion-M-1}.
\end{proof}

In the following, we will fix a positive integer $d$ and consider only trees whose maximum degree is $\leq d+1$. First of all, we study the behavior of the sequence $\tau(C_h,x)$. It is convenient to set $M_0(C_0,x)=0$ and $M_1(C_0,x)=1$ for the polynomials associated to the empty tree. Note that this choice allows adding empty branches without disturbing the recursive formul\ae{} \eqref{eq:recursion-M-0},
\eqref{eq:recursion-M-1}, \eqref{eq:recursion-tau}. Then
\eqref{eq:recursion-tau} translates into a recursion for $\tau(C_h,k)$ as
follows:
\begin{equation*}
  \tau(C_0,x)=0,\quad \tau(C_1,x)=1, \quad \tau(C_h,x)=\frac{1}{1+dx\tau(C_{h-1},x)}.
\end{equation*}
It is an easy exercise to prove the following explicit formula for $\tau(C_h,x)$ by means of induction:
\begin{lemma}\label{le:tau-complete-exact}
For every $x > 0$, we have
$$\tau(C_h,x) = \frac{\left( \frac{1+\sqrt{1+4dx}}{2} \right)^h - \left( \frac{1-\sqrt{1+4dx}}{2} \right)^h}{\left( \frac{1+\sqrt{1+4dx}}{2} \right)^{h+1} - \left( \frac{1-\sqrt{1+4dx}}{2} \right)^{h+1}}.$$
\end{lemma}
Now, the limit behavior of $\tau(C_h,x)$ for positive $x$ follows immediately.

\begin{lemma}\label{le:tau-complete-convergence}
  For every $x>0$, the subsequence  $\tau(C_{2h},x)$ is strictly increasing, whereas the
  subsequence $\tau(C_{2h+1},x)$ is strictly decreasing. Both subsequences are
  converging to the same limit $\frac{2}{1+\sqrt{1+4dx}}$ and we have
  \begin{equation*}
    0=\tau(C_0,x)<\tau(C_2,x)<\cdots< \frac{2}{1+\sqrt{1+4dx}} <\cdots <\tau(C_3,x)<\tau(C_1,x)=1.
  \end{equation*}
\end{lemma}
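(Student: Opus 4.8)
The plan is to read everything off the closed form provided by Lemma~\ref{le:tau-complete-exact}. Write $\alpha = \frac{1+\sqrt{1+4dx}}{2}$ and $\beta = \frac{1-\sqrt{1+4dx}}{2}$ for the two roots of $t^2 - t - dx$, so that $\tau(C_h,x) = \frac{\alpha^h - \beta^h}{\alpha^{h+1}-\beta^{h+1}}$. Since $x>0$ we have $\sqrt{1+4dx}>1$, hence $\alpha>1>0>\beta$, and because $\alpha-|\beta| = \alpha+\beta = 1 > 0$ we also get $\alpha > |\beta|$. Therefore the quotient $q := \beta/\alpha$ lies in $(-1,0)$, and dividing numerator and denominator by $\alpha^{h+1}$ puts the formula into the convenient shape
\[
\tau(C_h,x) = \frac{1}{\alpha}\cdot\frac{1-q^h}{1-q^{h+1}}.
\]

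First I would settle the limit and the position relative to it. As $h\to\infty$ we have $q^h\to 0$, so $\tau(C_h,x)\to \frac1\alpha = \frac{2}{1+\sqrt{1+4dx}} =: L$, which is the claimed common limit. A short manipulation then gives $\tau(C_h,x) - L = \frac1\alpha\cdot\frac{q^h(q-1)}{1-q^{h+1}}$. Here $|q|<1$ forces $1-q^{h+1}>0$ for every $h$, and $q-1<0$; hence the sign of $\tau(C_h,x)-L$ is the sign of $-q^h$, namely negative when $h$ is even and positive when $h$ is odd. Thus every even-indexed term lies below $L$ and every odd-indexed term above it.

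Next, the monotonicity of the two subsequences. Setting $p := q^2 \in (0,1)$ and $s := p^m$, I would rewrite $\tau(C_{2m},x) = \frac1\alpha\cdot\frac{1-s}{1-qs}$ and $\tau(C_{2m+1},x) = \frac1\alpha\cdot\frac{1-qs}{1-q^2 s}$, and differentiate each expression in $s$. One finds that the derivative of $\frac{1-s}{1-qs}$ equals $\frac{q-1}{(1-qs)^2}<0$, while the derivative of $\frac{1-qs}{1-q^2 s}$ equals $\frac{q(q-1)}{(1-q^2 s)^2}>0$, using $q<0$ and $q-1<0$. Since $s=p^m$ strictly decreases as $m$ grows, the first expression strictly increases in $m$ and the second strictly decreases; this is exactly the asserted behaviour of $\tau(C_{2h},x)$ and $\tau(C_{2h+1},x)$. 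Combining this with the sign information above and with the base values $\tau(C_0,x)=0$ and $\tau(C_1,x)=1$ then assembles the full interlacing chain, with the even subsequence rising from $0$ up to $L$ and the odd subsequence falling from $1$ down to $L$.

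The only delicate point is the sign bookkeeping caused by $q<0$: one must keep track that $q^h$ alternates in sign while $1-q^{h+1}$ stays positive throughout. Everything else is routine once the formula has been brought into the form $\frac1\alpha\cdot\frac{1-q^h}{1-q^{h+1}}$; in particular, a cleaner but equivalent route is to note that the underlying recursion map $t\mapsto\frac{1}{1+dxt}$ is strictly decreasing, so its square is strictly increasing, which forces the interlacing directly—but the explicit-formula computation is the most transparent given what is already available.
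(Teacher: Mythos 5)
Your proof is correct and follows exactly the route the paper intends: the paper gives no explicit argument, saying only that the lemma ``follows immediately'' from the closed form in Lemma~\ref{le:tau-complete-exact}, and your computation with $q=\beta/\alpha\in(-1,0)$ is precisely that derivation, carried out carefully (the sign analysis of $q^h$ and the positivity of $1-q^{h+1}$ are the only points needing care, and you handle them correctly).
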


From this point on, the method of \cite{Heuberger:2008:maximizing}, in particular
Section~5, can be applied. It suffices to replace $d$ by $dx$ at certain points
of the proof. For the sake of completeness, we state the relevant auxiliary results.

\begin{lemma}\label{le:trivial-tau-estimate}
  Let $T$ be a rooted tree and $x>0$. Then 
  \begin{equation*}
    \frac{1}{dx+1}\le \tau(T,x)\le 1,
  \end{equation*}
  unless $T$ is empty, where $\tau(T,x)=0$.
\end{lemma}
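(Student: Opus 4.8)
The plan is to argue by induction on the number of vertices of $T$, using only the recursion \eqref{eq:recursion-tau} and the convention $\tau(C_0,x)=0$. The role of the standing hypothesis that the maximum degree is $\le d+1$ is to control the number of branches: viewing $T$ as a branch whose root carries an implicit parent edge (as is the case for every rooted tree occurring in the method, the complete trees $C_h$ included, where the root has exactly $d$ children), the root of $T$ has at most $d$ children, so the number $\ell$ of branches satisfies $\ell\le d$. This restriction also propagates to the branches, since a child of the root has degree $\le d+1$ in $T$ and hence at most $d$ children of its own, keeping the inductive hypothesis within the same class.

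The only inductive hypothesis I actually need is the \emph{two-sided} estimate $0\le\tau(T_i,x)\le 1$ for every (possibly empty) branch. Nonnegativity is immediate from \eqref{eq:tau-definition} and \eqref{eq:recursion-M-0}: the polynomials $M_0(T,x)=\prod_i M(T_i,x)$ and $M(T,x)=M_0(T,x)+M_1(T,x)$ have nonnegative coefficients and hence are nonnegative for $x>0$ (indeed $M(T,x)\ge 1$ for nonempty $T$, the empty matching always being counted). For the upper bound, \eqref{eq:recursion-tau} gives $\tau(T,x)=\bigl(1+x\sum_{i=1}^{\ell}\tau(T_i,x)\bigr)^{-1}\le 1$, since $x>0$ and each $\tau(T_i,x)\ge 0$; the empty tree and the single vertex $C_1$ serve as base cases. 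The lower bound then drops out of the same formula: because $\ell\le d$ and $\tau(T_i,x)\le 1$ for each $i$, the denominator satisfies $1+x\sum_{i=1}^{\ell}\tau(T_i,x)\le 1+dx$, whence $\tau(T,x)\ge\tfrac{1}{1+dx}$ for every nonempty $T$. Note that the lower bound on the branches is never invoked—only the cheap inequality $\tau(T_i,x)\le 1$ enters—so both halves can be established in a single pass.

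There is no genuine obstacle here; the points that require care are purely bookkeeping. The essential one is the branch count: obtaining the sharp constant $\tfrac{1}{dx+1}$ rather than the weaker $\tfrac{1}{(d+1)x+1}$ relies on the root having at most $d$ children, which is precisely where the degree restriction is used and which is tight, the bound being attained by $T=C_2$ (a root with $d$ leaf-children, so $\tau(C_2,x)=\tfrac{1}{1+dx}$). The remaining care concerns the degenerate cases: the empty tree, where $\tau=0$ falls below the stated interval and is explicitly excluded, and the single vertex $C_1$, where $\ell=0$ forces $\tau=1$ and shows that the upper bound is also attained.
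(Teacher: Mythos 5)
Your proof is correct and is exactly the argument the paper intends: the paper's proof is the one-line instruction ``use induction and Lemma~\ref{le:recursion},'' and your write-up simply carries that out, with the right attention to the two details that matter (nonnegativity of the $M_j$ giving $\tau\le 1$, and the branch count $\ell\le d$ for rooted trees giving the constant $\tfrac{1}{dx+1}$).
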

\begin{proof}
  To prove this lemma, use induction and Lemma~\ref{le:recursion}.
\end{proof}

\begin{definition}\label{def:outline-tree}
  Let $T$ be a possibly rooted tree. Then we construct the \emph{outline graph} of $T$ by replacing all maximal subtrees isomorphic to some $C_k$, $k\ge 0$, by a special leaf $C_k$.  In this process, we attach $(d+1-r)$ leaves $C_0$ to internal nodes (non-leaves and non-root) of degree $r$ with $2\le r \le d$. If $T$ is a rooted tree with a root of degree $r$ ($1\le r\le d$), then we also attach $d-r$ leaves $C_0$ to it.
\end{definition}

The construction ensures that the outline graph of a rooted tree is a rooted $d$-ary
tree, and that the outline graph of an arbitrary tree of maximum degree $\leq d+1$ has only vertices of degree $1$ and $d+1$. An example is shown in Figure~\ref{fig:reduction-example}. The outline of a rooted tree $C_k$ is just the rooted tree consisting of the single leaf $C_k$.
\begin{figure}[htbp]
  \centering
  \includegraphics{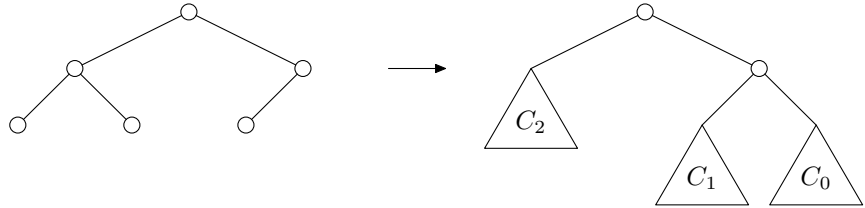}
  \caption{Reduction to the outline graph ($d=2$)}
  \label{fig:reduction-example}
\end{figure}

If enough information on the outline of a rooted tree is available, we can
determine it from its $\tau(T,x)$-value.

\begin{lemma}\label{le:tau-deconstruction}
  Let $j\ge 0$ be an integer and $T$ be a rooted tree whose outline does not
  contain any $C_k$ for $0\le k\le j-3$ and $x>0$. If
  \begin{equation*}
    j \text{ is odd and }\tau(C_{j},x)\le \tau(T,x)
  \end{equation*}
  or 
  \begin{equation*}
    j \text{ is even and } \tau(T,x)\le\tau(C_{j},x),
  \end{equation*}
  then $T\in\{C_{j-2}, C_{j}\}$.
\end{lemma}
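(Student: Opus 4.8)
The plan is to exploit the monotonicity structure established in Lemma~\ref{le:tau-complete-convergence} together with the recursive formula~\eqref{eq:recursion-tau}, and to argue by induction on $j$. The hypothesis that the outline of $T$ contains no $C_k$ for $0\le k\le j-3$ is precisely what lets the induction proceed: it controls how small the branches of $T$ can be, and hence how extreme the value $\tau(T,x)$ can become. First I would dispose of the small cases $j=0,1,2$ directly, since there the claim $T\in\{C_{j-2},C_j\}$ reduces (using the convention $C_{-2}=C_{-1}=C_0=\emptyset$ for negative or zero indices) to checking that a tree whose outline avoids the smallest $C_k$'s and whose $\tau$-value is bounded on the appropriate side must itself be one of a very short list of trees. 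These base cases should follow from Lemma~\ref{le:trivial-tau-estimate} and the explicit endpoint values $\tau(C_0,x)=0$, $\tau(C_1,x)=1$.

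For the inductive step I would look at the outline graph of $T$ rooted at its root. Either $T$ is already a single special leaf $C_k$ for some $k$, in which case the sign condition on $\tau(T,x)$ together with the interlacing chain
\begin{equation*}
  \tau(C_0,x)<\tau(C_2,x)<\cdots< \frac{2}{1+\sqrt{1+4dx}} <\cdots <\tau(C_3,x)<\tau(C_1,x)
\end{equation*}
from Lemma~\ref{le:tau-complete-convergence} pins $k$ down to exactly $j-2$ or $j$; or else the root of (the outline of) $T$ has the full degree $d$, with branches $T_1,\dots,T_d$. In the latter case I would apply~\eqref{eq:recursion-tau}, writing $\tau(T,x)=\bigl(1+x\sum_{i=1}^{d}\tau(T_i,x)\bigr)^{-1}$, and compare it against the identity $\tau(C_j,x)=\bigl(1+dx\,\tau(C_{j-1},x)\bigr)^{-1}$. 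The parity bookkeeping is the crux: when $j$ is even the hypothesis is $\tau(T,x)\le\tau(C_j,x)$, which by the monotone decreasing nature of $t\mapsto (1+xt)^{-1}$ is equivalent to $\sum_{i=1}^d\tau(T_i,x)\ge d\,\tau(C_{j-1},x)$, so \emph{some} branch $T_i$ satisfies $\tau(T_i,x)\ge\tau(C_{j-1},x)$ with $j-1$ odd; this is exactly the hypothesis of the lemma one level down. The branch $T_i$ inherits the outline condition, since removing $C_k$'s for $k\le j-3$ at the top guarantees the branches avoid $C_k$ for $k\le (j-1)-3$, so induction gives $T_i\in\{C_{j-3},C_{j-1}\}$.

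Having identified one branch, I would feed its value back into the recursion to squeeze the remaining branches. The strict monotonicity in Lemma~\ref{le:tau-complete-convergence} should force every branch to be the complete tree $C_{j-1}$ (equivalently all branches equal, of the correct parity), so that $T$ itself is forced to be $C_j$; the alternative $C_{j-2}$ arises from the boundary where a branch takes the value $\tau(C_{j-3},x)$ and the inequalities become tight only for the shorter complete tree. The main obstacle I anticipate is precisely this final rigidity argument: the parity hypothesis is genuinely necessary, because for the ``wrong'' parity the inequality $\tau(T,x)\le\tau(C_j,x)$ or $\tau(C_j,x)\le\tau(T,x)$ can be achieved by non-complete trees, and the interlacing in Lemma~\ref{le:tau-complete-convergence} must be invoked with careful attention to which subsequence (even or odd indices) governs the relevant bound. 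One must check that the aggregate bound $\sum_i\tau(T_i,x)\ge d\,\tau(C_{j-1},x)$, once one branch is known to lie in $\{C_{j-3},C_{j-1}\}$ and the rest are bounded by $1$ via Lemma~\ref{le:trivial-tau-estimate}, actually collapses to the equality case that identifies all the branches. Getting these inequalities to be simultaneously saturated is where the bulk of the calculation lives, and it is the step I would expect to require the most care.
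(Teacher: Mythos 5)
Your overall strategy---induction on $j$, base cases from Lemma~\ref{le:trivial-tau-estimate} and the endpoint values $\tau(C_0,x)=0$, $\tau(C_1,x)=1$, the single-leaf case settled by the interlacing chain of Lemma~\ref{le:tau-complete-convergence}, and the inductive step via \eqref{eq:recursion-tau} with the parity bookkeeping you describe---is exactly the intended argument; the paper itself only says the proof is analogous to Lemma~3.4 of \cite{Heuberger:2008:maximizing} with sums in place of products, and what you outline is that induction. However, the one step you defer (the final ``rigidity'' argument) is the step you would be graded on, and the tool you name for it is not the one that works: the bound $\tau(T,x)\le 1$ from Lemma~\ref{le:trivial-tau-estimate} cannot saturate the inequality $\sum_i\tau(T_i,x)\ge d\,\tau(C_{j-1},x)$, since $\tau(C_{j-1},x)<1$ leaves slack. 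The argument that actually closes it is this: for $j\ge 3$ the outline hypothesis excludes $C_0$, so in the non-leaf case the root has exactly $d$ nonempty branches, and (say $j$ even) \emph{every} branch with $\tau(T_i,x)\ge\tau(C_{j-1},x)$ satisfies the inductive hypothesis at level $j-1$ and, because $C_{j-3}$ is forbidden in the outline, must equal $C_{j-1}$, hence contributes \emph{exactly} $\tau(C_{j-1},x)$ to the sum; every remaining branch contributes strictly less than $\tau(C_{j-1},x)$ by definition. Therefore $\sum_i\tau(T_i,x)\ge d\,\tau(C_{j-1},x)$ forces all $d$ branches to equal $C_{j-1}$, i.e.\ $T=C_j$, with no further case analysis. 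The odd case is the mirror image.

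A second, smaller correction: your explanation of where $C_{j-2}$ comes from (``a branch takes the value $\tau(C_{j-3},x)$'') is misleading, since a branch isomorphic to $C_{j-3}$ would put a leaf $C_{j-3}$ into the outline of $T$ and is excluded by hypothesis. The alternative $T=C_{j-2}$ arises only in the case where the outline of $T$ is a single special leaf $C_k$: there $k\ge j-2$, and the chain of Lemma~\ref{le:tau-complete-convergence} together with the parity of $j$ forces $k\in\{j-2,j\}$. With these two repairs your proof is complete and coincides with the paper's.
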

\begin{proof}
  The inductive proof is analogous to that of
  \cite[Lemma~3.4]{Heuberger:2008:maximizing}, the only difference being
  the fact that here, sums are considered instead of products.
\end{proof}

\section{Minimal Trees with Respect to $x$}

We say that $T$ is a minimal tree with respect to some $x>0$, if it minimizes  $M(T,x)$ among all trees in $\mathcal{T}_{n,d}$.

The key lemma is an exchange lemma which gives a local optimality criterion.
\begin{lemma}\label{le:edge-exchange}
  Let $x>0$ and let $T$ be a minimal tree with respect to $x$. If there are
  (possibly empty) rooted trees $L_1$, \ldots $L_d$, $R_1$, \ldots, $R_d$ and a
  tree $T_0$ such that $T$ can be decomposed as
  \begin{center}
    \includegraphics{ms-tree-3_6.eps}
  \end{center}
  and such that $\tau(L_1,x)<\tau(R_1,x)$ (after appropriate reordering of the
  $L_i$'s and the $R_i$'s), then
  \begin{equation*}
    \max\{\tau(L_i,x): 1\le i\le d \} \le \min\{\tau(R_i,x): 1\le i\le d\}.
  \end{equation*}
  \end{lemma}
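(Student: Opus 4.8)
The plan is to realize the hypothesis geometrically as an \emph{exchange (or rather re-partition) operation}. Write $u$ and $v$ for the two vertices carrying the branches $L_1,\dots,L_d$ and $R_1,\dots,R_d$, and regard $T_0$ as a fixed core tree containing $u$ and $v$. Any redistribution of the $2d$ branches into a group of $d$ hanging from $u$ and a group of $d$ hanging from $v$ produces a tree $T'$ with the same vertex set and with every degree unchanged (in particular $\deg(u)=\deg(v)=d+1$ is preserved), so $T'\in\mathcal{T}_{n,d}$. Since $T$ is minimal, $M(T,x)\le M(T',x)$ for every such $T'$, and the whole lemma will follow by showing that this forces the branches to be ``sorted'' across $u$ and $v$.

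First I would derive an explicit expression for $M(T,x)$ as a function of the branch assignment. Conditioning a matching on the behaviour at $u$ and $v$ (whether each is matched inside $T_0$ or to one of its own branches) and using the recursions of Lemma~\ref{le:recursion}, one gets, with $P=\prod_{i=1}^d M(L_i,x)M(R_i,x)$, $\sigma_u=\sum_{i=1}^d\tau(L_i,x)$, $\sigma_v=\sum_{i=1}^d\tau(R_i,x)$, and $N_{ab}$ the generating polynomial of the matchings of $T_0$ covering $u$ iff $a=1$ and $v$ iff $b=1$,
\[
M(T,x)=P\Bigl(N_{00}(1+x\sigma_u)(1+x\sigma_v)+N_{01}(1+x\sigma_u)+N_{10}(1+x\sigma_v)+N_{11}\Bigr).
\]
Here I use $M_0(U,x)=\prod_i M(L_i,x)$ and $M(U,x)=M_0(U,x)(1+x\sigma_u)$ for the subtree $U$ consisting of $u$ together with the $L_i$ (and likewise for $v$), which is exactly \eqref{eq:recursion-M-0}--\eqref{eq:recursion-M-1} rewritten through $\tau(U,x)=1/(1+x\sigma_u)$. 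The decisive feature is that $P$, all four $N_{ab}$, and the total $\Sigma:=\sigma_u+\sigma_v=\sum_i(\tau(L_i,x)+\tau(R_i,x))$ are \emph{invariant} under re-partitioning the branches; only the split of $\Sigma$ into $\sigma_u$ and $\sigma_v$ varies.

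Substituting $\sigma_v=\Sigma-\sigma_u$ turns the bracket into a quadratic $g(\sigma_u)$ whose leading coefficient is $-x^2N_{00}$. Since the empty matching leaves both $u$ and $v$ uncovered we have $N_{00}\ge 1>0$, so $g$ is \emph{strictly concave}. A strictly concave function attains its minimum over any finite set of arguments only at the smallest or the largest feasible value of $\sigma_u$. Minimality of $T$ therefore forces its value of $\sigma_u$ to equal either $\sigma_{\min}$ (the sum of the $d$ smallest of the $2d$ values $\tau(\cdot,x)$) or $\sigma_{\max}$ (the sum of the $d$ largest): otherwise a single swap of a branch at $u$ with a branch at $v$ that moves $\sigma_u$ toward the appropriate extreme would strictly decrease $g$, hence $M(\cdot,x)$. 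An extremal value of $\sigma_u$ is attained precisely by a sorted assignment, i.e.\ one in which all $\tau$-values on one side are $\le$ all $\tau$-values on the other (any violating pair could be swapped to push $\sigma_u$ further out). Finally, the hypothesis $\tau(L_1,x)<\tau(R_1,x)$ excludes the possibility that the $u$-side dominates the $v$-side, leaving exactly $\max\{\tau(L_i,x):1\le i\le d\}\le\min\{\tau(R_i,x):1\le i\le d\}$.

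The routine bookkeeping (verifying the four-term conditioning and the invariances) is mechanical, and the concavity-to-sortedness argument is elementary. The main obstacle is pinning down the \emph{sign} of the relevant coefficient: the proof hinges entirely on the cross-term coefficient $x^2N_{00}$ being strictly positive, equivalently on $g$ being strictly concave in $\sigma_u$ once $\Sigma$ is fixed. This is what drives the minimiser to an extremal, sorted configuration rather than to an interior one; if this sign were reversed the conclusion would be the opposite inequality. I would therefore take care to check that $N_{00}>0$ holds in full generality (it does, via the empty matching of $T_0$) and that the re-partitioned trees indeed remain in $\mathcal{T}_{n,d}$, so that minimality can legitimately be invoked against all of them.
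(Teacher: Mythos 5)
Your proposal is correct and follows essentially the same route as the paper: the same conditioning of the matchings of $T_0$ on whether the two junction vertices are covered, the same factorization of $M(T,x)$ into the invariant product $P$ times a function of the split $(\sigma_u,\sigma_v)$, and the same exchange argument against all repartitions of the $2d$ branches. The only cosmetic difference is that you justify the ``minimum at an extremal, sorted split'' step by strict concavity of the quadratic in $\sigma_u$ (using $N_{00}\ge 1$), whereas the paper handles the bilinear summand and the two linear summands separately by rearrangement-type arguments; both are valid.
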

\begin{proof}
  We need four auxiliary quantities:
  \begin{itemize}
  \item $m_{00}(T_0,k)$: number of matchings of $T_0$ of cardinality $k$ where neither $v$ nor $w$ is covered.
  \item $m_{10}(T_0,k)$: number of matchings of $T_0$ of cardinality $k$ where $v$ is covered, but $w$ is not.
  \item $m_{01}(T_0,k)$: number of matchings of $T_0$ of cardinality $k$ where $w$ is covered, but $v$ is not.
  \item $m_{11}(T_0,k)$: number of matchings of $T_0$ of cardinality $k$ where both $v$ and $w$ are covered.
  \end{itemize}
  The corresponding polynomials are denoted by $M_{ij}(T_0,x)=\sum_k m_{ij}(T_0,k)x^k$. 
  Define 
  \begin{multline*}
    G(L_1,\ldots, L_d,R_1,\ldots,
    R_d;x):=M_{00}(T_0,x)\bigg(1+x\sum_{i=1}^d \tau(L_i,x)\bigg)\bigg(1+x\sum_{i=1}^d \tau(R_i,x)\bigg)\\
    +M_{10}(T_0,x)\bigg(1+x\sum_{i=1}^d \tau(R_i,x)\bigg)+M_{01}(T_0,x)\bigg(1+x\sum_{i=1}^d \tau(L_i,x)\bigg)+M_{11}(T_0,x).
  \end{multline*}

  Then it is easily seen that
  \begin{equation*}
    M(T,x)=G(L_1,\ldots, L_d,R_1,\ldots,R_d;x)\prod_{i=1}^d M(L_i,x) \prod_{i=1}^d M(R_i,x).
  \end{equation*}
  In view of the minimality of $M(T,x)$, we must have
  \begin{equation*}
    G(L_1,\ldots,L_d,R_1,\ldots,R_d;x)\le G(\pi(L_1),\ldots,\pi(L_d),\pi(R_1),\ldots,\pi(R_d);x)
  \end{equation*}
  for all permutations $\pi$ of $\{L_1,\ldots,L_d,R_1,\ldots,R_d\}$. Ignoring the assumption
  $\tau(L_1,x)<\tau(R_1,x)$ for the moment, we see that the minimum of the first summand among all possible permutations is attained if
  \begin{multline}\label{eq:rearrangement-possibilities-tau}
    \max\{\tau(L_i,x): i=1,\ldots,d \} \le \min\{\tau(R_i,x): i=1,\ldots,d\} \text{ or } \\ \min\{\tau(L_i,x): i=1,\ldots,d \} \ge \max\{\tau(R_i,x): i=1,\ldots,d\}
  \end{multline}
  by standard arguments (note that the sum of the two factors does not depend
  on the permutation). The sum of the second and the third summand is
  minimized if $\max\{\tau(L_i,x): i=1,\ldots,d \} \le \min\{\tau(R_i,x): i=1,\ldots,d\}$
  in the case $M_{10}(T_0,x) \leq M_{01}(T_0,x)$ and is minimized if $\min\{\tau(L_i,x): i=1,\ldots,d \} \ge \max\{\tau(R_i,x): i=1,\ldots,d\}$  in the case that $M_{10}(T_0,x) \geq
  M_{01}(T_0,x)$. Therefore, the minimality of $G$ yields
  \eqref{eq:rearrangement-possibilities-tau}. The assumption
  $\tau(L_1,x)<\tau(R_1,x)$ implies the first possibility.
\end{proof}

The remaining steps of the proof in \cite{Heuberger:2008:maximizing} do
not depend on the particular recursions any more, they only depend on the
monotonicity properties in Lemma~\ref{le:tau-complete-convergence} and
Lemma~\ref{le:tau-deconstruction} as well as the local optimality criterion in
Lemma~\ref{le:edge-exchange}. The only difference is that all inequalities have
to reversed (and $\rho(T)$ has to be replaced by $\tau(T,x)$). Thus the
basis of the induction can be formulated as follows.

\begin{lemma}
  Let $T$ be a minimal tree with respect to $x>0$ and let $j$ be the least nonnegative integer such that
  the outline graph of $T$ contains a $C_j$. Then the outline graph of $T$
  contains $C_j$ at most $(d-1)$ times and there is a vertex $v$ of the outline
  graph of $T$ which is adjacent to all copies of $C_j$ in the outline graph of $T$.
\end{lemma}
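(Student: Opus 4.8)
The plan is to convert the whole statement into inequalities between the auxiliary quantities $\tau(\cdot,x)$ and to exploit the extremal role played by $\tau(C_j,x)$ among the branches of a minimal tree. First I would fix the sign convention coming from Lemma~\ref{le:tau-complete-convergence}: $\tau(C_j,x)$ is the \emph{smallest} attainable branch value when $j$ is even and the \emph{largest} when $j$ is odd. Since $j$ is the least index occurring in the outline, no $C_k$ with $k<j$ appears anywhere in $T$, so every branch $B$ of $T$ has an outline free of $C_k$ for $0\le k\le j-3$ and Lemma~\ref{le:tau-deconstruction} applies to it. Consequently, any branch whose $\tau(B,x)$ lies on (or reaches) the extreme side of $\tau(C_j,x)$ dictated by the parity of $j$ must belong to $\{C_{j-2},C_j\}$, and $C_{j-2}$ is excluded because its index is below $j$; hence such a branch equals $C_j$, and in particular no branch of $T$ lies strictly beyond $\tau(C_j,x)$. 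This is the only place where Lemma~\ref{le:tau-complete-convergence} and Lemma~\ref{le:tau-deconstruction} are used; everything afterwards rests on Lemma~\ref{le:edge-exchange}.

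Second, I would extract from Lemma~\ref{le:edge-exchange} a \emph{separation principle}: for any two vertices $v\neq w$ of $T$ of degree $d+1$, regarded (after deleting the path joining them) as carrying $d$ branches $L_1,\ldots,L_d$ and $R_1,\ldots,R_d$ respectively, the multisets $\{\tau(L_i,x)\}$ and $\{\tau(R_i,x)\}$ cannot interleave: one lies entirely below the other. Indeed, if they interleaved there would exist indices with $\tau(L_p,x)<\tau(R_q,x)$ and $\tau(R_r,x)<\tau(L_s,x)$; the former is exactly the hypothesis of Lemma~\ref{le:edge-exchange} (after reordering), whose conclusion $\max_i\tau(L_i,x)\le\min_i\tau(R_i,x)$ contradicts the latter. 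All the inequalities here are the mirror images of those in \cite{Heuberger:2008:maximizing}, as announced in the text.

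Third, I would prove clustering and the count simultaneously. Any copy of $C_j$ in the outline is a leaf whose unique neighbour $v$ is an internal vertex, hence of degree $d+1$ in $T$. Suppose two copies of $C_j$ were attached to distinct vertices $v\neq w$; then $\tau(C_j,x)$ lies in both multisets of the separation principle. Separation then forces one of the two multisets, say the one at $v$, to lie entirely on the extreme side of $\tau(C_j,x)$, and since both contain the extremal value $\tau(C_j,x)$ this pins \emph{every} entry of that multiset to $\tau(C_j,x)$; by the extremality of the first step all $d$ branches at $v$ are therefore copies of $C_j$. But then $v$ together with its $d$ branches is a copy of $C_{j+1}$, which the construction in Definition~\ref{def:outline-tree} would have contracted to a single special leaf, contradicting the appearance of $v$ as an internal vertex. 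Hence all copies of $C_j$ share a common neighbour $v$. The same contraction argument caps their number: were $v$ adjacent to $d$ copies of $C_j$, the vertex together with those branches would again be a contractible $C_{j+1}$, so at most $d-1$ copies occur. The degenerate situations in which $T$ itself is a complete tree $C_m$ (a single copy, $j=m$) or in which $v$ is the root of the outline I would dispatch by direct inspection.

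The main obstacle is the tie-handling in the third step: since all copies of $C_j$ carry exactly the same value $\tau(C_j,x)$, Lemma~\ref{le:edge-exchange} can never be triggered \emph{between} two $C_j$'s, so the strict inequality that drives the exchange must be manufactured from a heavier branch at one of the two vertices. Isolating the separation principle first is precisely what makes this rigorous, because it upgrades the weak, strict‑inequality hypothesis of Lemma~\ref{le:edge-exchange} into a clean comparison of entire multisets, after which the extremality of $\tau(C_j,x)$ closes the argument. A secondary nuisance is the bookkeeping for the two parities of $j$; I would phrase the extremality of the first step parity‑agnostically (``$\tau(C_j,x)$ is extremal on the side dictated by the parity of $j$'') so that the even and odd cases are handled by a single line of reasoning rather than a duplicated argument.
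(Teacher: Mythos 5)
Your argument is correct and is exactly the one the paper intends: its own ``proof'' is a pointer to the analogous Lemma~4.3 of \cite{Heuberger:2008:maximizing}, and the text states that only Lemmas~\ref{le:tau-complete-convergence}, \ref{le:tau-deconstruction} and \ref{le:edge-exchange} are needed --- precisely the three ingredients you combine (one-sided extremality of $\tau(C_j,x)$ among branches, separation of the two branch multisets at distinct internal vertices, and the contraction contradiction when all $d$ branches at a vertex equal $C_j$). The one caveat concerns degenerate configurations: the outline may have \emph{no} internal vertex at all, consisting of two adjacent special leaves (this actually occurs for a minimal tree, e.g.\ $T^{*}_{6,2}$ is two adjacent copies of $C_2$), so your opening assertion in the clustering step that every $C_j$-leaf neighbours an internal vertex --- and indeed the literal statement of the lemma itself --- needs a separate convention there, and this case is not among the degenerate situations you list for direct inspection.
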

\begin{proof}
  Analogous to the proof of \cite[Lemma~4.3]{Heuberger:2008:maximizing}.
\end{proof}

The inductive step can be formulated as follows.

\begin{lemma}\label{le:inductive-step}
  Let $x>0$, $T$ be a minimal tree with respect to $x$, $k$ be a nonnegative integer and assume that the
  outline graph of $T$ can be decomposed as
  \begin{center}
    \includegraphics{ms-tree-3_9.eps}
  \end{center}
  for some rooted trees $L_k$ (possibly empty) and $R_k$ with 
  \begin{subequations}
    \begin{equation*}
      k\text{ is even and }\tau(C_k,x)< \tau(L_k,x)<\tau(C_{k+2},x)
    \end{equation*}
    or
    \begin{equation*}
      k\text{ is odd and }\tau(C_{k+2},x)< \tau(L_k,x)<\tau(C_{k},x)
    \end{equation*}
  \end{subequations}
  or
  \begin{equation*}
    L_k=C_k.
  \end{equation*}
  Assume that $R_k$ is non-empty and the outline of $R_k$ does not contain any $C_{\ell}$ with $\ell<k$.

  Then exactly one of the following assertions is true:
  \begin{enumerate}
  \item $R_k\in \{C_{k}, C_{k+1}, C_{k+3} \}$,
  \item $R_k$ consists of $d$ branches
    $C_{k+1}$, $C_{k+1}$, $C_{\ell_3}$, \ldots,
    $C_{\ell_d}$ with $\ell_i\in\{k,k+1,k+2\}$ for $3\le i\le d$,
  \item the outline of $R_k$ can be decomposed as
    \begin{center}
      \includegraphics{energy_30.eps}
    \end{center}
    for $B_{k,1}$, \ldots, $B_{k,d-1}\in\{C_k,C_{k+2}\}$ and a non-empty rooted tree $R_{k+1}$ whose outline does
    not contain any $C_{\ell}$ for $\ell\le k$. Furthermore,
    \begin{subequations}
      \begin{equation*}
        k\text{ is even and }\tau(C_{k+3},x)< \tau(L_{k+1},x)<\tau(C_{k+1},x)
      \end{equation*}
      or
      \begin{equation*}
        k\text{ is odd and }\tau(C_{k+1},x)< \tau(L_{k+1},x)<\tau(C_{k+3},x)
      \end{equation*}
    \end{subequations}
    where $L_{k+1}$ is defined as follows:
    \begin{center}
      \includegraphics{energy_31.eps}
    \end{center}
  \end{enumerate}
\end{lemma}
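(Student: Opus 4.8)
The plan is to run the exchange-and-identify argument that the excerpt attributes to \cite{Heuberger:2008:maximizing}, now driven by $\tau(T,x)$ instead of $\rho(T)$ and with every inequality reversed, taking Lemmas~\ref{le:tau-complete-convergence}, \ref{le:tau-deconstruction} and~\ref{le:edge-exchange} as the only structural inputs. First I would root $R_k$ at the vertex that attaches it to the rest of the outline and examine its $d$ outline branches $B_1,\dots,B_d$. By hypothesis the outline of $R_k$ contains no $C_\ell$ with $\ell<k$, so the same holds for each $B_i$, which puts every $B_i$ in the range of applicability of Lemma~\ref{le:tau-deconstruction} for the thresholds $j$ near $k$.

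The first real step is to turn minimality into a sorting condition. Applying Lemma~\ref{le:edge-exchange} across the edge that joins $L_k$'s attachment vertex to the root of $R_k$, with $L_k$ in the role of one of the $L_i$, forces the $\tau$-values on the two sides of that edge to separate: if some branch of $R_k$ had $\tau$-value strictly on the wrong side of $\tau(L_k,x)$, a swap would strictly decrease $M(T,x)$, contradicting minimality. Since the hypothesis places $\tau(L_k,x)$ strictly between $\tau(C_k,x)$ and $\tau(C_{k+2},x)$ (and symmetrically in the odd case), Lemma~\ref{le:tau-complete-convergence} then confines the admissible $\tau$-values of the $B_i$ to a narrow band straddling the common limit $\tfrac{2}{1+\sqrt{1+4dx}}$.

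Next I would identify the branches explicitly. Feeding the even-index thresholds $j\in\{k,k+2\}$ into Lemma~\ref{le:tau-deconstruction} collapses every branch whose $\tau$-value does not exceed $\tau(C_{k+2},x)$ to an element of $\{C_k,C_{k+2}\}$ (the excluded $C_{k-2}$ being ruled out by the depth hypothesis), while the odd-index threshold $j=k+1$ identifies the branches that reach $\tau(C_{k+1},x)$ as copies of $C_{k+1}$. This produces the stated trichotomy: either all $d$ branches are shallow complete trees, in which case counting the copies of $C_{k+1}$ separates conclusions~(1) and~(2), or exactly one branch $R_{k+1}$ is genuinely deeper and the other $d-1$ lie in $\{C_k,C_{k+2}\}$, giving conclusion~(3). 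Pairwise exclusivity is immediate once one records, for each case, the number and type of shallow branches and whether a deep branch survives.

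To make conclusion~(3) self-propagating I would finally check the interval claimed for the auxiliary tree $L_{k+1}$. By the recursion~\eqref{eq:recursion-tau}, $\tau(L_{k+1},x)=\bigl(1+x\bigl(\tau(L_k,x)+\sum_{i}\tau(B_{k,i},x)\bigr)\bigr)^{-1}$, and since $t\mapsto(1+xt)^{-1}$ is strictly decreasing, the bounds on $\tau(L_k,x)$ at level $k$ are carried to the bounds on $\tau(L_{k+1},x)$ at level $k+1$ with the parity reversed: all-$C_k$ branches would give $\tau(C_{k+1},x)$ and all-$C_{k+2}$ branches $\tau(C_{k+3},x)$, so the generic configuration lands strictly inside $(\tau(C_{k+3},x),\tau(C_{k+1},x))$, exactly the next-step hypothesis. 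The main obstacle is not any single estimate but the parity bookkeeping that pervades the case analysis: keeping the inequality directions straight when $k$ switches parity, and verifying that the boundary configurations (for instance $L_k=C_k$ with all $B_{k,i}=C_k$, where $\tau(L_{k+1},x)$ hits the endpoint $\tau(C_{k+1},x)$) are absorbed into conclusions~(1) and~(2) rather than~(3), so that the three alternatives remain jointly exhaustive and mutually exclusive.
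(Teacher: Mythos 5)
Your overall strategy is the one the paper intends: its own proof of Lemma~\ref{le:inductive-step} is literally ``analogous to the proof of \cite[Lemma~4.4]{Heuberger:2008:maximizing}'', using only the monotonicity of $\tau(C_h,x)$ (Lemma~\ref{le:tau-complete-convergence}), the identification Lemma~\ref{le:tau-deconstruction}, and the exchange Lemma~\ref{le:edge-exchange}, with $\rho$ replaced by $\tau(\cdot,x)$ and all inequalities reversed. Your sketch uses exactly these three inputs in exactly these roles, and your propagation of the $\tau$-interval for $L_{k+1}$ via \eqref{eq:recursion-tau} is the correct mechanism, including the endpoint computations $\tau(L_{k+1},x)=\tau(C_{k+1},x)$ for the all-$C_k$ configuration and $\tau(C_{k+3},x)$ for the all-$C_{k+2}$ configuration.

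Two points are genuinely incomplete or wrong. First, the crux of the case analysis --- that at most one branch of $R_k$ can fail to be identified as some $C_j$ with $j\in\{k,k+1,k+2\}$, and the exact shape of alternatives (1) and (2) (why $C_{k+2}$ is excluded from (1), why (2) demands at least \emph{two} branches equal to $C_{k+1}$) --- is asserted rather than derived. A single application of Lemma~\ref{le:edge-exchange} across the edge joining $L_k$ to the root of $R_k$ does not give this; one must also apply the exchange lemma across the edge from the root of $R_k$ into each candidate deep branch (that branch's sub-branches on one side, $L_k$ together with the remaining branches of $R_k$ on the other) and combine the resulting sorting conditions with Lemma~\ref{le:tau-deconstruction} at the thresholds $j=k+1,k+2,k+3$; this is where the trichotomy actually comes from. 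Second, your proposed disposal of the boundary configuration $L_k=C_k$, $B_{k,1}=\cdots=B_{k,d-1}=C_k$ is incorrect: that configuration still contains a deep branch $R_{k+1}$, so it cannot be ``absorbed into conclusions (1) and (2)''. The right observation is that in this configuration $L_{k+1}$ is isomorphic to $C_{k+1}$, i.e.\ it realizes the analogue of the hypothesis alternative ``$L_k=C_k$'' at level $k+1$ (and, by the maximality convention in Definition~\ref{def:outline-tree}, such a complete subtree is contracted to a single special leaf in the outline graph rather than appearing as the displayed decomposition); the strict inequalities in conclusion (3) are reserved for the genuinely intermediate configurations.
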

\begin{proof}
  Analogous to the proof of \cite[Lemma~4.4]{Heuberger:2008:maximizing}.
\end{proof}

Repeated application of Lemma~\ref{le:inductive-step} now yields Proposition~\ref{prop:minimal-tree-x} and thus also our main theorem.

\begin{proof}[Proof of Proposition~\ref{prop:minimal-tree-x}]
  Analogous to the proof of \cite[Theorem~1]{Heuberger:2008:maximizing}.
\end{proof}

\section{The value of the minimal energy}

Since the extremal trees are described in terms of complete $d$-ary trees, we have to study the energy of these trees first. Note that, in view of \eqref{eq:recursion-M-0} and the definition of $\tau(M,x)$, we have
$$M(C_h,x) = \frac{1}{\tau(C_h,x)} \cdot M(C_{h-1},x)^d.$$
Iterating this equation yields
$$M(C_h,x) = \prod_{j=1}^h \tau(C_j,x)^{-d^{h-j}},$$
and in view of Lemma~\ref{le:tau-complete-exact} this gives us the explicit
formula
$$M(C_h,x) = \prod_{j=1}^h \left(\frac{Q_{j+1}(x)}{Q_j(x)} \right)^{d^{h-j}},$$
where 
\begin{align*}
  Q_j(x)&:= \frac{u(x)^j - v(x)^j}{u(x)-v(x)},\\
  u(x) &:= \frac{1+\sqrt{1+4dx}}{2},\\
  v(x) &:= \frac{1-\sqrt{1+4dx}}{2}.
\end{align*}
The denominator $u(x)-v(x)$ has been introduced such that $Q_j(x)$ is always a
polynomial: Indeed, the recursion
$$Q_1(x) \equiv 1,\ Q_2(x) \equiv 1,\quad Q_j(x) = Q_{j-1}(x) + dx Q_{j-2}(x)$$
holds, and it follows by induction that $Q_j$ is a polynomial of degree $\lfloor (j-1)/2 \rfloor$.

Now we have
\begin{align*}
  M(C_h,x)&=Q_{h+1}(x) Q_1(x)^{-d^{h-1}}\prod_{j=2}^h Q_j(x)^{d^{h+1-j}}\prod_{j=2}^h
  Q_j(x)^{-d^{h-j}}\\
  &= Q_{h+1}(x) \prod_{j=1}^h Q_j(x)^{(d-1)d^{h-j}},
\end{align*}
where the fact that $Q_1(x)=1$ has been used. It turns out that the zeros of $Q_j$ can be explicitly computed. If $Q_j(x) = 0$, then $u(x)^j = v(x)^j$, and so
$$\frac{u(x)}{v(x)} = \frac{1+\sqrt{1+4dx}}{1-\sqrt{1+4dx}}$$
has to be an $j$-th root of unity $\zeta$. Then,
$$x = - \frac{\zeta}{d(1+\zeta)^2} = - \frac{1}{2d(1+\operatorname{Re}(\zeta))}.$$
Thus, $x$ has to be of the form
$$x = - \frac{1}{2d \left(1 + \cos \frac{2k\pi}{j} \right)}$$
for some $0 \leq k < \frac{j}{2}$. However, note that $x = - \frac{1}{4d}$ is also a zero of the denominator $u(x)-v(x) = \sqrt{1+4dx}$, and that there are no double zeros, since the derivative is given by
$$\frac{d}{dx} \left(u(x)^j - v(x)^j\right) = \frac{jd}{\sqrt{1+4dx}} \left( u(x)^{j-1} + v(x)^{j-1} \right),$$
which cannot be $0$ if $u(x) = \zeta v(x)$ for a $j$-th root of unity $\zeta \neq -1$. Hence, the zeros of $Q_j$ are precisely the numbers
$$- \frac{1}{2d \left(1 + \cos \frac{2k\pi}{j} \right)},\quad k = 1,2,\ldots,\left\lfloor \frac{j-1}{2} \right\rfloor,$$
and it follows from~\eqref{eq:characteristic-matching} that the characteristic polynomial $\phi(C_h,x)$ can be written as
$$\phi(C_h,x) = x^n M(C_h,-x^{-2}) = x^n Q_{h+1}(-x^{-2}) \prod_{j=1}^h Q_j(-x^{-2})^{(d-1)d^{h-j}}.$$
Hence, the nonzero eigenvalues of $\phi(C_h,x)$ are
$$\pm \sqrt{2d \left(1 + \cos \frac{2k\pi}{j} \right)} = \pm 2\sqrt{d} \cos \frac{k\pi}{j},\quad k = 1,2,\ldots,\left\lfloor \frac{j-1}{2} \right\rfloor,$$
with multiplicity $(d-1)d^{h-j}$ for $j=1,2,\ldots,h$ and multiplicity 1 for $j=h+1$. If follows that the energy of $C_h$ is given by
$$E(C_h) = \left( \sum_{j=1}^h (d-1)d^{h-j} \sum_{k=1}^{\lfloor (j-1)/2 \rfloor} 4\sqrt{d} \cos \frac{k\pi}{j} \right) + \sum_{k=1}^{\lfloor h/2 \rfloor} 4\sqrt{d} \cos \frac{k\pi}{h+1}.$$
Noticing that
$$\sum_{k=1}^{\lfloor (j-1)/2 \rfloor} \cos \frac{k\pi}{j} = \begin{cases} \frac{1}{2} \left( \cot \frac{\pi}{2j} - 1 \right) & j \equiv 0 \mod 2, \\ \frac{1}{2} \left( \csc \frac{\pi}{2j} - 1 \right) & j \equiv 1 \mod 2, \end{cases}$$
this reduces to
\begin{align*}
E(C_h) &= 2\sqrt{d}(d-1) \left( \sum_{\substack{j=1\\j \equiv 0 \mod 2}}^h d^{h-j} \left( \cot \frac{\pi}{2j} - 1 \right) + \sum_{\substack{j=1\\j \equiv 1 \mod 2}}^h d^{h-j} \left( \csc \frac{\pi}{2j} - 1 \right) \right) \\
&\ \ \ + \begin{cases} 2\sqrt{d} \left( \csc \frac{\pi}{2(h+1)} - 1 \right) & h \equiv 0 \mod 2, \\ 2\sqrt{d} \left( \cot \frac{\pi}{2(h+1)} - 1 \right) & h \equiv 1 \mod 2. \end{cases}
\end{align*}
Next, we determine the asymptotic behavior of the energy of $C_h$:
\begin{lemma}
The energy of a complete $d$-ary tree $C_h$ satisfies
$$E(C_h) = \alpha_d |C_h| + O(1),$$
where $|C_h|$ denotes the number of vertices of $C_h$ and $\alpha_d$ is given
by \eqref{eq:alpha-d-definition}.
\end{lemma}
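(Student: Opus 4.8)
The plan is to start from the closed form for $E(C_h)$ just derived and to peel off the genuinely linear (in $|C_h|$) part, showing that everything left over is bounded. Throughout I assume $d \geq 2$; for $d = 1$ the tree $C_h$ is simply a path on $h$ vertices and the defining series for $\alpha_d$ does not converge, so that case would be handled separately. Abbreviating $c_j := \frac{1}{2}\bigl(\cot\frac{\pi}{2j} - 1\bigr)$ for even $j$ and $c_j := \frac{1}{2}\bigl(\csc\frac{\pi}{2j} - 1\bigr)$ for odd $j$, the formula reads $E(C_h) = 4\sqrt{d}\bigl((d-1)\sum_{j=1}^h d^{h-j} c_j + c_{h+1}\bigr)$. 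The first step is to record the elementary expansions $\cot\frac{\pi}{2j} = \frac{2j}{\pi} + O(1/j)$ and $\csc\frac{\pi}{2j} = \frac{2j}{\pi} + O(1/j)$, coming from $\cot\theta, \csc\theta = \theta^{-1} + O(\theta)$; these give the uniform estimate $c_j = \frac{j}{\pi} + O(1)$ (more precisely $c_j = \frac{j}{\pi} - \frac{1}{2} + O(1/j)$). In particular the series $\Sigma := \sum_{j \geq 1} d^{-j}\bigl(\cot\text{ or }\csc\tfrac{\pi}{2j} - 1\bigr) = 2\sum_{j \geq 1} d^{-j} c_j$ has terms of size $O(j d^{-j})$ and therefore converges, so that $\alpha_d = 2\sqrt{d}(d-1)^2 \Sigma$ is a well-defined constant.

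Next I would split the truncated sum against its infinite completion, writing
\begin{equation*}
(d-1)\sum_{j=1}^h d^{h-j} c_j = (d-1)d^h \sum_{j \geq 1} d^{-j} c_j - (d-1)d^h \sum_{j > h} d^{-j} c_j.
\end{equation*}
For the first piece, $\sum_{j \geq 1} d^{-j} c_j = \frac{1}{2}\Sigma = \frac{\alpha_d}{4\sqrt{d}(d-1)^2}$, so multiplying by $4\sqrt{d}$ yields $\frac{\alpha_d}{d-1} d^h$. Using the identity $|C_h| = \frac{d^h - 1}{d-1}$, i.e.\ $d^h = (d-1)|C_h| + 1$, this equals $\alpha_d |C_h| + \frac{\alpha_d}{d-1} = \alpha_d |C_h| + O(1)$, which is exactly the asserted linear term.

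It remains to show that the two leftover contributions are together $O(1)$: the boundary term $4\sqrt{d}\, c_{h+1}$ and the subtracted tail $-4\sqrt{d}(d-1)d^h \sum_{j > h} d^{-j} c_j$. This is the crux of the argument, because \emph{each of these is individually of size $\frac{4\sqrt{d}}{\pi}h + O(1)$, and only their difference is bounded}. Indeed, $c_{h+1} = \frac{h+1}{\pi} + O(1)$ gives $4\sqrt{d}\, c_{h+1} = \frac{4\sqrt{d}}{\pi}h + O(1)$. For the tail I would insert $c_j = \frac{j}{\pi} + O(1)$: the dominant part is $\frac{4\sqrt{d}(d-1)}{\pi} d^h \sum_{j > h} j d^{-j}$, and the standard tail identity $\sum_{j > h} j d^{-j} = \frac{d^{-h}\bigl((h+1)d - h\bigr)}{(d-1)^2}$ turns this into $\frac{4\sqrt{d}}{\pi(d-1)}\bigl((h+1)d - h\bigr) = \frac{4\sqrt{d}}{\pi(d-1)}\bigl(h(d-1) + d\bigr) = \frac{4\sqrt{d}}{\pi}h + O(1)$, while the $O(1)$-part of $c_j$ contributes only $O\bigl(d^h \sum_{j > h} d^{-j}\bigr) = O(1)$ via $\sum_{j > h} d^{-j} = \frac{d^{-h}}{d-1}$.

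Hence the two leftover contributions have the same $h$-linear coefficient $\frac{4\sqrt{d}}{\pi}$ and cancel, leaving $O(1)$, so that $E(C_h) = \alpha_d |C_h| + O(1)$ follows. The only genuinely delicate point is this exact cancellation of the $\Theta(h)$ terms arising from the single top-level eigenvalue layer ($j = h+1$) against the truncation error of the convergent series; the remainder is routine bookkeeping with geometric-type sums and the trigonometric expansions of $\cot$ and $\csc$ near $0$.
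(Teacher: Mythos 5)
Your proposal is correct and follows essentially the same route as the paper: both expand $\cot\frac{\pi}{2j}$ and $\csc\frac{\pi}{2j}$ as $\frac{2j}{\pi}+O(1)$, complete the truncated sum to the convergent series defining $\alpha_d$, and observe that the $\Theta(h)$ contribution of the top-level term $j=h+1$ cancels exactly against the $\Theta(h)$ truncation error of the tail. Your version merely makes the tail identity and the cancellation more explicit than the paper does.
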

\begin{proof}
Note that $|C_h| = \frac{d^h-1}{d-1} = \frac{d^h}{d-1} + O(1)$ and $\cot \frac{\pi}{2j} = \frac{2j}{\pi} + O(1)$, $\csc \frac{\pi}{2j} = \frac{2j}{\pi} + O(1)$, so that
\begin{align*}
E(C_h) &= 2\sqrt{d}(d-1)d^h \left( \sum_{\substack{j=1\\j \equiv 0 \mod 2}}^h d^{-j} \left( \cot \frac{\pi}{2j} - 1 \right) + \sum_{\substack{j=1\\j \equiv 1 \mod 2}}^h d^{-j} \left( \csc \frac{\pi}{2j} - 1 \right) \right)\\&\qquad + \frac{4\sqrt{d}h}{\pi} + O(1) \\
&= \alpha_d |C_h| - 2\sqrt{d}(d-1)d^h \left( \sum_{\substack{j > h\\j \equiv 0 \mod 2}} d^{-j} \left( \cot \frac{\pi}{2j} - 1 \right) + \sum_{\substack{j > h\\j \equiv 1 \mod 2}} d^{-j} \left( \csc \frac{\pi}{2j} - 1 \right) \right)\\&\qquad + \frac{4\sqrt{d}h}{\pi} + O(1) \\
&= \alpha_d |C_h| - 2\sqrt{d}(d-1)d^h \sum_{j > h} d^{-j} \left( \frac{2j}{\pi} + O(1) \right) + \frac{4\sqrt{d}h}{\pi} + O(1) \\
&= \alpha_d |C_h| - 2\sqrt{d}(d-1)d^h \left( \frac{2hd^{-h}}{(d-1)\pi} + O(d^{-h}) \right) + \frac{4\sqrt{d}h}{\pi} + O(1) \\
&= \alpha_d |C_h| + O(1),
\end{align*}
as claimed.
\end{proof}

Now, we are able to prove our main asymptotic result:

\begin{proof}[Proof of Theorem~\ref{th:asympt}]
Using the decomposition of  $T^*_{n,d}$  as shown in Definition~\ref{definition:wagner-tree}, we note that\begin{multline*}
  \left( \prod_{k=0}^{\ell-1} \prod_{j=1}^{d-1} M(B_{k,j},x) \right) \left(
    \prod_{j=1}^{d} M(B_{\ell,j},x) \right) \leq M(T^*_{n,d},x) \\
  \leq \left(
    \prod_{k=0}^{\ell-1} \prod_{j=1}^{d-1} M(B_{k,j},x) \right) \left(
    \prod_{j=1}^{d} M(B_{\ell,j},x) \right) (1+x)^{d(\ell+1)}
\end{multline*}
for arbitrary $x > 0$, since every matching in the union $\bigcup_k \bigcup_j B_{k,j}$ is also a matching in $T^*_{n,d}$, whereas every matching of $T^*_{n,d}$ consists of a matching in $\bigcup_k \bigcup_j B_{k,j}$ and a subset of the remaining $\leq d(\ell+1)$ edges. Making use of~\eqref{eq:coul} once again, this implies that
\begin{multline*}
  \sum_{k=0}^{\ell-1} \sum_{j=1}^{d-1} E(B_{k,j}) + \sum_{j=1}^{d}
  E(B_{\ell,j}) \leq E(T^*_{n,d}) \\ \leq \sum_{k=0}^{\ell-1} \sum_{j=1}^{d-1}
  E(B_{k,j}) + \sum_{j=1}^{d} E(B_{\ell,j}) + \frac2{\pi} d(\ell+1) \int_0^{\infty} x^{-2}
  \log(1+x^2)\,dx.
\end{multline*}
Since $\int_0^{\infty} x^{-2} \log(1+x^2)\,dx = \pi$, this implies that
\begin{align*}
E(T^*_{n,d}) &= \sum_{k=0}^{\ell-1} \sum_{j=1}^{d-1} E(B_{k,j}) + \sum_{j=1}^{d} E(B_{\ell,j}) + O(\ell) \\
&= \sum_{k=0}^{\ell-1} \sum_{j=1}^{d-1} \left( \alpha_d |B_{k,j}| + O(1) \right)
  + \sum_{j=1}^{d} \left( \alpha_d |B_{\ell,j}| + O(1) \right) + O(\ell) \\
  &= \alpha_d \left( \sum_{k=0}^{\ell-1} \sum_{j=1}^{d-1} |B_{k,j}| + \sum_{j=1}^{d} |B_{\ell,j}| \right) + O(\ell) \\
  &= \alpha_d (|T^*_{n,d}| - O(\ell)) + O(\ell) \\
  &= \alpha_d n + O(\ell).
\end{align*}
It is not difficult to see that $\ell=O(\log n)$ (this follows from~\eqref{eq:digital-expansion}, see \cite{Heuberger:2008:positional} for a detailed analysis), and so we finally have
$$E(T^*_{n,d}) = \alpha_d n + O(\log n),$$
which finishes the proof.
\end{proof}

The following table shows some numerical values of the constants $\alpha_d$:

\begin{center}
\begin{tabular}{|c|c|}
\hline
$d$ & $\alpha_d$ \\
\hline
2 & 1.102947505597 \\
3 & 0.970541979946 \\
4 & 0.874794345784 \\
5 & 0.802215758706 \\
6 & 0.744941364903 \\
7 & 0.698315075830 \\
8 & 0.659425329682 \\
9 & 0.626356806404 \\
10 & 0.597794680849 \\
20 & 0.434553264777 \\
50 & 0.279574397741 \\
100 & 0.198836515295 \\
\hline
\end{tabular}
\end{center}

\bibliographystyle{abbrv}
\bibliography{energy}

\end{document}

